\newtheorem{theorem}{Theorem}[section]
\newtheorem{corollary}[theorem]{Corollary}
\newtheorem{lemma}[theorem]{Lemma}
\newtheorem{proposition}[theorem]{Proposition}
\newtheorem{definition}[theorem]{Definition}
\newtheorem{example}[theorem]{Example}
\newtheorem{remark}[theorem]{Remark}
\def\bea{\begin{eqnarray*}}
\def\eea{\end{eqnarray*}}
\def\ot{\otimes}
\def\ra{\rightarrow}
\def\al{\alpha}
\def\bea{\begin{eqnarray*}}
\def\eea{\end{eqnarray*}}
\begin{document}
\title{Graded semisimple algebras are symmetric}
\author{
S. D\u{a}sc\u{a}lescu$^{1,2}$, C. N\u{a}st\u{a}sescu$^{1,3}$ and
L. N\u{a}st\u{a}sescu$^{1,3}$}
\address{$^1$ University of Bucharest, Faculty of Mathematics and Computer Science,
Str. Academiei 14, Bucharest 1, RO-010014, Romania}
\address{$^2$ ICUB, University of Bucharest}\address{ $^3$ Institute of Mathematics of the Romanian
Academy, PO-Box
1-764\\
RO-014700, Bucharest, Romania}
\address{
 e-mail: sdascal@fmi.unibuc.ro, Constantin\_nastasescu@yahoo.com,
 lauranastasescu@gmail.com
}

\date{}
\maketitle

\begin{abstract}
We study graded symmetric algebras, which are the symmetric
monoids in the monoidal category of vector spaces graded by a
group. We show that a finite dimensional graded semisimple algebra
 is graded symmetric. The center of a symmetric algebra is not necessarily symmetric, but we prove that the center of a finite
dimensional
graded division algebra is symmetric, provided that the order of the grading group is not divisible by the characteristic of the base field. \\
2010 MSC: 16W50, 16K20, 16S35, 18D10\\
Key words:  graded algebra, Frobenius algebra, symmetric algebra,
graded division algebra, crossed product, graded semisimple
algebra.
\end{abstract}

\section{Introduction and preliminaries}
Frobenius algebras are algebraic objects that arose from
representation theory of groups, but they are also present in
other areas of mathematics, for example in the theory of Hopf
algebras and quantum groups, in the theory of compact oriented
manifolds, in topological quantum field theory, etc., see
\cite{kadison}. A finite dimensional algebra $A$ over a field $k$
is Frobenius if $A$ and its $k$-dual $A^*$ are isomorphic as left
$A$-modules, or equivalently, as right $A$-modules. There are
certain Frobenius algebras having more symmetry and also a rich
representation theory; these are the symmetric algebras. $A$ is
called symmetric if $A$ and $A^*$ are isomorphic as
$A,A$-bimodules, or equivalently, if there exists a linear map
$\lambda:A\ra k$ such that $\lambda(ab)=\lambda(ba)$ for any
$a,b\in A$, and ${\rm Ker}\; \lambda$ does not contain non-zero
left ideals. It was proved by Eilenberg and Nakayama in their
pioneering work \cite{en} on Frobenius algebras that a finite
dimensional semisimple algebra is symmetric. An equivalent
characterization of Frobenius algebras, given in \cite{abrams2},
opened the way to study Frobenius algebras in monoidal categories,
which was initiated in \cite{muger}, \cite{street}, see \cite{bt}
for more recent developments. Symmetric algebras have also been
considered in monoidal categories with more structure (existence
of duals), for example in sovereign categories, see \cite{fuchs}.
It seems to be an interesting problem to understand the structure
of Frobenius (symmetric) algebras in certain monoidal categories.
A study in this direction was done in \cite{dnn} for the category
of vector spaces graded by an arbitrary group $G$.  A finite
dimensional $G$-graded algebra $A=\oplus A_g$ is symmetric in this
category (we shortly say that $A$ is graded symmetric), if $A$ and
$A^*$ are isomorphic as $G$-graded $A,A$-bimodules. Here the
grading on $A^*$ is given by $(A^*)_g=\{ f\in A^*\; |\; f(A_h)=0
\; \mbox{for any } h\neq g^{-1}\}$. Graded symmetric algebras were
considered in \cite{harris} in the case of strongly graded
algebras, and in \cite[Section 3]{marcus} in the case of graded
crossed products. Taking into account what happens in the monoidal
category of vector spaces, it is a natural question to ask whether
finite dimensional graded semisimple algebras are graded
symmetric, i.e. symmetric monoids in the category of graded vector
spaces. Our main result is\\

{\bf Theorem A.} {\it A finite dimensional graded semisimple
algebra is graded symmetric.}\\

The first step is to look at graded division algebras. After we
make some general considerations about graded symmetric graded
crossed products in Section \ref{sectioncrossedproducts}, we apply
them to graded division algebras in Section
\ref{sectiondivisionalgebras}. If $A$ is a finite dimensional
$G$-graded division algebra, and $D$ is the homogeneous component
of degree $e$ of $A$, where $e$ is the neutral element of $G$, we
construct an action of $G$ on the space $D/[D,D]$, and prove that
$A$ is graded symmetric if and only if $D/[D,D]$ has non-zero
coinvariants. By using Hilbert Theorem 90 we show that $D/[D,D]$
is isomorphic as a $kG$-module to the center $\ell$ of $D$, which
is itself a $kG$-module. Finally, $\ell$ has non-zero coinvariants
by using the Normal Basis Theorem. Next we transfer the result to
graded simple algebras using the graded version of Wedderburn's
Theorem, and further to graded semisimple algebras.

Theorem A is of particular interest in the case of graded division
algebras, which occur in graded Clifford theory. Indeed, if $R$ is
a group graded algebra, and $\Sigma$ is a simple object in the
category of graded left $R$-modules, then the full additive
subcategory generated by $\Sigma$ in the category of left
$R$-modules (the objects are epimorphic images of direct sums of
copies of $\Sigma$) is equivalent to the category of left
$\Delta$-modules, where $\Delta$ is the endomorphism ring of the
$R$-module $\Sigma$, see \cite{dade} or \cite[Chapter 4]{nvo}.
Since $\Sigma$ is graded simple, $\Delta$ is a graded division
algebra, so then the symmetry of $\Delta$ provides information on
its representation theory, and further on the structure of
$\Sigma$ as an $R$-module.

A result related to Theorem A was proved in \cite[Theorem
4]{harris}: if $G$ is a finite group and $A$ is a strongly
$G$-graded algebra over a commutative ring $k$, such that the
homogeneous component of $A$ of trivial degree is a finite direct
sum of full matrix algebras over $k$, then $A$ is graded
symmetric. There is just little overlap with our result. In the
case where $k$ is a field, the result of \cite{harris} covers only
a limited class of graded semisimple algebras.

We show that the center of a graded division algebra need not be
symmetric. However,  we have the
following. \\

{\bf Theorem B.} {\it Let $A$ be a finite dimensional $G$-graded
division algebra such that ${\rm char}\, k\nmid |G|$. Then the
center of $A$ is a symmetric algebra.}\\

We mention that a question which is in some sense dual, was
discussed in \cite{nakayama}: when is a quotient of a symmetric
algebra symmetric? We refer to \cite{kkl} for some recent
developments, related to block theory of group algebras.

We work over a field $k$. If $G$ is a group, a $G$-graded algebra
is a $k$-algebra $A$ with a decomposition of $k$-spaces
$A=\oplus_{g\in G}A_g$ such that $A_gA_h\subseteq A_{gh}$ for any
$g,h\in G$. $A$ is called a graded division algebra if any
non-zero homogeneous element is invertible. In this case $A_e$ is
a division algebra, where $e$ is the neutral element of $G$. A
$G$-graded algebra is called a graded crossed product if any
homogeneous component of $A$ contains an invertible element. The
support $H=\{ h\in G|A_g\neq 0\}$ of a $G$-graded division algebra
$A$ is a subgroup of $G$, and $A$ is obviously a graded crossed
product when regarded as an $H$-graded algebra. A graded algebra
$A$ is called graded semisimple if the category of graded left
$A$-modules is semisimple, i.e. any object is a sum of simple
objects in the category.

For notation and terminology about graded algebras we refer to
\cite{nvo}. We recall from \cite{dnn} that a finite dimensional
$G$-graded algebra $A$ is graded symmetric if and only if there
exists a linear map $\lambda:A\ra k$ such that
$\lambda(ab)=\lambda(ba)$ for any $a,b\in A$, $\lambda(A_g)=0$ for
any $g\neq e$, and ${\rm Ker}\; \lambda$ does not contain non-zero
graded left ideals.

\section{Graded crossed products} \label{sectioncrossedproducts}

Let $A=\oplus_{g\in G}A_g$ be a $G$-graded crossed product. Denote
$A_e=D$. For any $g\in G$ we choose an invertible element $u_g\in
A_g$. We define a weak action of $G$ on $D$, i.e. a map
$\sigma:G\ra Aut_{k-alg}(D)$, by $\sigma(g)(x)=u_gxu_g^{-1}$ for
any $g\in G$ and $x\in D$. Denote $\sigma(g)(x)=g\cdot x$. Of
course, this weak action depends on the choice of the family
$(u_g)_{g\in G}$.

Let $\ell =Cen(D)$, the center of the algebra $D$. Then $\ell$ is
invariant under the action of each $\sigma(g)$, so $\sigma$
induces a weak action of $G$ on $\ell$. This is in fact a usual
action of $G$ on $\ell$, i.e. the map $\theta:G\ra
Aut_{k-alg}(\ell)=Gal(\ell/k)$ induced by $\sigma$ is a group
morphism, since for any $g,h\in G$ and any $a\in \ell$ \bea g\cdot
(h\cdot a)&=&u_gu_hau_h^{-1}u_g^{-1}\\&=&
u_{gh}u_{gh}^{-1}u_gu_hau_h^{-1}u_g^{-1}\\
&=&u_{gh}au_{gh}^{-1}u_gu_hu_h^{-1}u_g^{-1}\\
&=&u_{gh}au_{gh}^{-1}\\&=&(gh)\cdot a.\eea Moreover, this action
of $G$ on $\ell$ does not depend on the choice of the set
$(u_g)_{g\in G}$. Indeed, if $(v_g)_{g\in G}$ is another set of
invertible elements with $v_g\in A_g$ for any $g\in G$, one has
\bea v_gav_g^{-1}&=&u_gu_g^{-1}v_gav_g^{-1}\\
&=&u_gau_g^{-1}v_gv_g^{-1}\\
&=&u_gau_g^{-1}\eea for any $g\in G$ and $a\in \ell$.\\

On the other hand, $[D,D]$ is invariant under the weak action of
$G$, since $\sigma(g)$ is an algebra morphism. Then $\sigma$
induces a weak action of $G$ on the factor space $D/[D,D]$,
defined by $g\cdot \overline{x}=\overline{g\cdot x}$, where
$\overline{x}$ denotes the class modulo $[D,D]$. Since \bea g\cdot
(h\cdot x)&=&u_gu_hxu_h^{-1}u_g^{-1}\\
&=&u_gu_hu_{gh}^{-1}u_{gh}xu_h^{-1}u_g^{-1}\\
&=&(u_gu_hu_{gh}^{-1})(u_{gh}xu_h^{-1}u_g^{-1})-(u_{gh}xu_{h}^{-1}u_g^{-1})(u_gu_hu_{gh}^{-1})+u_{gh}xu_{gh}^{-1}\\
&=&[u_gu_hu_{gh}^{-1},u_{gh}xu_h^{-1}u_g^{-1}]+ (gh)\cdot x\eea
for any $x\in D$ and any $g,h\in G$, we see that $g\cdot (h\cdot
x)-(gh)\cdot x\in [D,D]$, so the weak action of $G$ on $D/[D,D]$
is also an usual action.

\begin{definition} {\rm A symmetric $k$-linear form $\lambda:D\ra k$ is
called $G$-invariant if $\lambda(x)=\lambda(g\cdot x)$ for any
$g\in G$ and any $x\in D$.}
\end{definition}

We note that the definition of $G$-invariance on a symmetric
linear form $\lambda$ does not depend on the choice of
$(u_g)_{g\in G}$. Indeed, if $\lambda$ is $G$-invariant and
$(v_g)_{g\in G}$ is another set of invertible elements, with $v_g$
of degree $g$, then \bea \lambda (v_gxv_g^{-1})&=&\lambda
(g\cdot(v_gxv_g^{-1}))\\&=&\lambda
(u_{g^{-1}}v_gxv_g^{-1}u_{g^{-1}}^{-1})\\&=&\lambda
(xv_g^{-1}u_{g^{-1}}^{-1}u_{g^{-1}}v_g)\\&=&\lambda (x).\eea

\begin{proposition} \label{propsymmetriccrossedproducts}
Let $A$ be a $G$-graded crossed product, and let $A_e=D$. Then $A$
is graded symmetric if and only if there exists a $G$-invariant
symmetric linear form $\lambda:D\ra k$, whose kernel does not
contain non-zero left ideals of $D$.
\end{proposition}
\begin{proof}
Assume that $A$ is graded symmetric, thus there is a non-zero
symmetric linear form $\Lambda:A\ra k$ with $\Lambda(A_g)=0$ for
any $g\neq e$, and such that the kernel of $\Lambda$ does not
contain non-zero graded left ideals of $A$. Let $\lambda:D\ra k$
be the restriction of $\Lambda$ to $D$. This is clearly symmetric.
If $I$ is a left ideal of $D$ with $\lambda(I)=0$, then
$\Lambda(AI)=0$, so $AI$ is zero, and then so is $I$. Moreover,
$\lambda (g\cdot x)=\Lambda (u_gxu_g^{-1})=\Lambda
(xu_{g^{-1}}u_g)=\Lambda(x)=\lambda(x)$ for any $g\in G$ and $x\in
D$, so $\lambda$ is $G$-invariant.\\

Conversely, let $\lambda:D\ra k$ be a $G$-invariant non-zero
symmetric linear form. Define $\Lambda:A\ra k$ by
$\Lambda(a)=\lambda (a_e)$ for any $a\in A$, where $a_e$ is the
homogeneous component of degree $e$ of $a$. If $J$ is a graded
left ideal of $A$ with $\Lambda(J)=0$, then $\lambda(J_e)=0$, so
$J_e=0$. Then $J=AJ_e=0$. Also, $\Lambda$ is symmetric. Indeed,
since $\Lambda (A_g)=0$ for $g\neq e$, it is enough to show that
$\Lambda(ab)=\Lambda(ba)$ for any $a\in A_g$ and $b\in
A_{g^{-1}}$, where $g\in G$. For such $a$ and $b$ we have \bea
\Lambda(ab)&=&\lambda(ab)\\
&=&\lambda(u_{g^{-1}}abu_{g^{-1}}^{-1})\\
&=&\lambda(bu_{g^{-1}}^{-1}u_{g^{-1}}a)\\
&=&\lambda (ba)\\
&=&\Lambda (ba)\eea
\end{proof}

\begin{remark}
{\rm We note that the condition on the $G$-invariance of the
symmetrizing form $\lambda$ on $D$ cannot be omitted in
Proposition \ref{propsymmetriccrossedproducts}. In other words, if
$A_e=D$ is symmetric, it does not necessarily follow that $A$ is
graded symmetric. An example of such a graded crossed product is
given in \cite[Remark 5.3]{dnn}, where a symmetric algebra $R$
with an automorphism $g$ of order 2 is considered, such that the
subalgebra of invariants $R^g$ is not symmetric. Then the skew
group algebra $A=R*C_2$ is a $C_2$-graded crossed product, $A_e=R$
is symmetric, but $A$ is not even symmetric.

This example shows that if $A$ is a finite dimensional strongly
graded algebra such that $A_e$ is symmetric, it does not follow in
general that $A$ is graded symmetric. The converse always holds,
i.e. if $A$ is graded symmetric, then it is clear that $A_e$ must
be symmetric.}
\end{remark}

\section{Graded division algebras}
\label{sectiondivisionalgebras}

Throughout this section $A$ is a finite dimensional $G$-graded
division algebra. Since the support of $A$ is a subgroup of $G$,
there is no loss of generality if we assume that the support of
$A$ is $G$. In particular $A$ is a $G$-graded crossed product, and
we can use the results and notation of Section
\ref{sectioncrossedproducts}. In this case, $D=A_e$ is a division
algebra.

We recall that for a vector space $V$ on which the group $G$ acts
(i.e. $V$ is a left $kG$-module), the dual space $V^*$ has an
induced right $G$-action. A linear form $v^*\in V^*$ is
$G$-invariant if and only if $v^*(g\cdot v-v)=0$ for any $v\in V$
and $g\in G$. Thus the subspace of $G$-invariants $(V^*)^G$ can be
identified with $(V_G)^*$, the dual of the space $V_G$ of
coinvariants of $V$; $V_G$ is the factor space of $V$ with respect
to the subspace spanned by all vectors of the form $g\cdot v-v$,
with $g\in G$ and $v\in V$. In other words, $V_G=V/\omega(G)V$,
where $\omega(G)$ is the augmentation ideal of the group algebra
$kG$.

We use these considerations for $V=D/[D,D]$, with the $G$-action
defined in Section \ref{sectioncrossedproducts}. We note that the
right action induced on $V^*$ is a special case of the
construction performed in \cite[Lemma 1]{harris} for a strongly
graded algebra over a commutative ring.

 Now
we have the following characterization.

\begin{proposition} \label{symmetriccoinvariants}
Let $A$ be a finite dimensional $G$-graded division algebra. Then
the following assertions are equivalent.\\
(1) $A$ is graded symmetric.\\
(2) $Hom(D/[D,D],k)^G\neq 0$.\\
(3) $(D/[D,D])_G\neq 0$.
\end{proposition}
\begin{proof} It is clear that the $k$-linear space of all
symmetric linear forms on $D$ can be identified with the space of
all linear forms on $D/[D,D]$, i.e. with $Hom(D/[D,D],k)$. A
symmetric linear form on $D$ is $G$-invariant if and only if it
belongs to $Hom(D/[D,D],k)^G$ when regarded as a form on
$D/[D,D]$. Since $D$ is a division algebra, everything follows now
from Proposition \ref{propsymmetriccrossedproducts} and the
considerations before this Proposition for $V=D/[D,D]$.
\end{proof}

\begin{lemma} \label{commutatorextension}
Let $A$ be a $k$-algebra and let $k\subset K$ be an extension of
fields. Then $[K\ot_kA,K\ot_kA]=K\ot_k[A,A]$, where $K\ot_kA$ is
regarded as a $K$-algebra.
\end{lemma}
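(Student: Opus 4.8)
The plan is to prove the two inclusions separately, after first making sure that $K\ot_k[A,A]$ can legitimately be viewed as a $K$-subspace of $K\ot_kA$. Since $k$ is a field, $K$ is free, hence flat, as a $k$-module, so the inclusion $[A,A]\hookrightarrow A$ remains injective after applying $K\ot_k-$; I would identify $K\ot_k[A,A]$ with its image in $K\ot_kA$ throughout. I also keep in mind that, regarding $K\ot_kA$ as a $K$-algebra, $[K\ot_kA,K\ot_kA]$ denotes the $K$-subspace spanned by the commutators, so that on that side the relevant spans are $K$-linear rather than $k$-linear.

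For the inclusion $K\ot_k[A,A]\subseteq[K\ot_kA,K\ot_kA]$, the key observation is the single identity $[1\ot a,\lambda\ot b]=\lambda\ot(ab-ba)=\lambda\ot[a,b]$ for $\lambda\in K$ and $a,b\in A$, which uses only that multiplication in $K\ot_kA$ is $(\mu\ot u)(\nu\ot v)=\mu\nu\ot uv$. Thus every elementary tensor $\lambda\ot[a,b]$ is an honest commutator in the $K$-algebra $K\ot_kA$. Since $[A,A]$ is $k$-spanned by the commutators $[a,b]$, the space $K\ot_k[A,A]$ is $K$-spanned by the tensors $\lambda\ot[a,b]$, and hence is contained in $[K\ot_kA,K\ot_kA]$.

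For the reverse inclusion I would expand an arbitrary commutator by bilinearity. Writing $x=\sum_i\mu_i\ot a_i$ and $y=\sum_j\nu_j\ot b_j$ in $K\ot_kA$, one computes $[x,y]=\sum_{i,j}\mu_i\nu_j\ot[a_i,b_j]$, which lies in $K\ot_k[A,A]$ because each $[a_i,b_j]\in[A,A]$. As $K\ot_k[A,A]$ is a $K$-subspace of $K\ot_kA$ containing every commutator, it contains the whole $K$-span of the commutators, that is, $[K\ot_kA,K\ot_kA]\subseteq K\ot_k[A,A]$. Combining the two inclusions yields the claimed equality.

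The computations are routine bilinear manipulations; the only point that genuinely needs care is the identification of $K\ot_k[A,A]$ with a subspace of $K\ot_kA$, which is exactly where flatness (freeness) of the field extension enters. I expect no real obstacle beyond making that identification explicit and keeping straight that the commutator space of the $K$-algebra is a $K$-span.
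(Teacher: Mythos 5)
Your proof is correct and is essentially the paper's own argument written out in full: the paper simply cites the identity $[\alpha\ot a,\beta\ot b]=\alpha\beta\ot[a,b]$ and declares the result immediate, while you expand both inclusions explicitly and add the (correct, if routine) remark that flatness of $K$ over $k$ justifies viewing $K\ot_k[A,A]$ as a subspace of $K\ot_kA$. No substantive difference in approach.
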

\begin{proof}
It follows immediately from the relation $[\alpha\ot a,\beta\ot
b]=\alpha\beta \ot [a,b]$ for any $\alpha,\beta\in K$ and any
$a,b\in A$.
\end{proof}

A consequence of the previous Lemma, applied to the extension
$k\subseteq \overline{k}$, where $\overline{k}$ denotes the
algebraic closure of $k$, is the following.

\begin{corollary} \label{dimensioncommutator} (\cite[Lemma 3 and footnote 14]{asano})
Let $D$ be a finite dimensional $k$-division algebra, and let
$\ell$ be the center of $D$. Then ${\rm dim}_{\ell}\; [D,D]={\rm
dim}_{\ell}\; D-1$.
\end{corollary}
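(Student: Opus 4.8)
The plan is to reduce to the case of a full matrix algebra by extending scalars to a splitting field and then to invoke Lemma \ref{commutatorextension}. First I would observe that, since $D$ is a finite dimensional division algebra with center $\ell$, it is a central division algebra over $\ell$, hence a central simple $\ell$-algebra; in particular $\dim_\ell D = n^2$ for some positive integer $n$. Regarding $D$ as an algebra over its center $\ell$, let $K$ be a splitting field of $D$, for instance the algebraic closure $\bar{\ell}$, so that there is an isomorphism of $K$-algebras $K\ot_\ell D \cong M_n(K)$.

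Next I would apply Lemma \ref{commutatorextension} with base field $\ell$ and the field extension $\ell\subset K$, which yields $[K\ot_\ell D, K\ot_\ell D] = K\ot_\ell [D,D]$. Since extension of scalars along $\ell\subset K$ preserves dimensions, $\dim_K\big(K\ot_\ell [D,D]\big) = \dim_\ell [D,D]$. On the other hand, transporting the commutator space through the isomorphism $K\ot_\ell D\cong M_n(K)$, the left-hand side is carried to $[M_n(K),M_n(K)]$. Thus it suffices to compute $\dim_K [M_n(K),M_n(K)]$.

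For this last computation I would use the classical description of the commutator space of a full matrix algebra as the space of trace-zero matrices: every commutator has trace zero because $\mathrm{tr}(xy)=\mathrm{tr}(yx)$, and conversely the off-diagonal matrix units $E_{ij}$ (with $i\neq j$) together with the differences $E_{ii}-E_{jj}$ are themselves commutators and already span all trace-zero matrices. Since the trace map $M_n(K)\ra K$ is surjective, its kernel has dimension $n^2-1$. Combining the three identities gives $\dim_\ell [D,D] = n^2-1 = \dim_\ell D - 1$, as claimed.

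The only point that requires a little care is the characteristic-free identification of $[M_n(K),M_n(K)]$ with the trace-zero matrices: when $\mathrm{char}\,K$ divides $n$, the identity matrix itself becomes trace-zero, but this does not affect the dimension count, since the trace map remains surjective and the span of commutators still coincides with its kernel. Beyond this, the argument is routine dimension bookkeeping, so I expect no serious obstacle.
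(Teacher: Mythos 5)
Your argument is correct and follows essentially the same route as the paper: pass to the center $\ell$, extend scalars to $\overline{\ell}$ using Lemma \ref{commutatorextension}, identify $\overline{\ell}\otimes_\ell D$ with $M_n(\overline{\ell})$, and use that $[M_n(K),M_n(K)]$ has dimension $n^2-1$. The only difference is that you spell out the characteristic-free identification of $[M_n(K),M_n(K)]$ with the trace-zero matrices, which the paper simply cites as well known.
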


\begin{corollary} \label{corollary1}
Let $D$ be a finite dimensional $k$-division algebra with center
$\ell$, such that ${\rm char}\, k$ does not divide ${\rm
dim}_{\ell}D$. Then $1\notin [D,D]$.
\end{corollary}
\begin{proof}
If $1\in [D,D]$, then $1\in \overline{\ell}\ot _{\ell}[D,D]=
[\overline{\ell}\ot_{\ell}D,\overline{\ell}\ot_{\ell}D]$ by Lemma
\ref{commutatorextension}. By \cite[Theorem 15.1]{lam1} we have
$\overline{\ell}\ot _{\ell}D\simeq M_m(\overline{\ell} )$, where
$m^2={\rm dim}_{\ell}D$. Then $1\in
[M_m(\overline{\ell}),M_m(\overline{\ell})]$, which is not
possible, since $tr(1)=m\neq 0$ in $\overline{\ell}$, and the
trace of any commutator in $M_m(\overline{\ell})$ is zero.
\end{proof}

\begin{remark} {\rm (1) If ${\rm char}\, k=0$, the Corollary above applies to any finite
dimensional division algebra. The same happens if ${\rm char}\,
k=p>0$, provided that $k$ is a perfect field. This follows from a
result of Albert which says that the Brauer group of a perfect
field has no $p$-torsion, see the comment in \cite[Theorem
2.5]{fgg}.\\
(2) It is possible that $1\in [D,D]$ for certain finite
dimensional division algebras $D$. One such example is a
quaternion division algebra in characteristic 2. Let
$k=\mathbb{F}_2(t)$ be the field of rational fractions in one
indeterminate over the field with two elements. Then the
4-dimensional $k$-algebra $D$ with basis $\{ 1,u,v,w\}$, subject
to relations
$$u^2+u=1,\;\; v^2=t,\;\; w=uv=v(u+1)$$
is a division algebra, and $1=uv-vu\in [D,D]$. The structure of
this division algebra is discussed in detail in \cite{conrad}.}
\end{remark}

Now we can prove the following, which is a key step towards
Theorem A.

\begin{theorem} \label{theoremgradeddivisionalgebra}
Let $A$ be a finite dimensional $G$-graded division algebra. Then
$A$ is graded symmetric.
\end{theorem}
\begin{proof}
Let $z\in D\setminus [D,D]$. By Corollary
\ref{dimensioncommutator}, $D=\ell z +[D,D]$. Let $g\in G$. Since
$g\cdot [D,D]=[D,D]$, we see that $g\cdot z\notin [D,D]$, so there
exist $\alpha_g\in \ell^*$ and $c_g\in [D,D]$ such that
$z=\alpha_g(g\cdot z)+c_g$. Now if $g,h\in G$, we have \bea
z&=&\alpha_g(g\cdot z)+c_g\\
&=&\alpha_g (g\cdot (\alpha_h(h\cdot z)+c_h))+c_g\\
&=&\alpha_g(g\cdot \alpha_h)(gh\cdot z)+\alpha_g(g\cdot
c_h)+c_g\eea showing that $c_{gh}=\alpha_g(g\cdot c_h)+c_g$ and
$\alpha_{gh}=\alpha_g(g\cdot \alpha_h)$. The latter relation just
says that the family $(\alpha_g)_{g\in G}$ is a 1-cocycle of $G$
in $\ell^*$.\\

Let $\theta:G\ra Gal(\ell/k)$ be the group morphism associated to
the action of $G$ on $\ell$. Let $N=Ker(\theta)$, and pick some
$n\in N$. Then $f:D\ra D$, $f(x)=n\cdot x$, is an automorphism of
the division algebra $D$ with $f(a)=a$ for any $a\in \ell$. By the
Noether-Skolem Theorem, $f$ is inner, so there is $u\in D$ such
that $f(x)=uxu^{-1}$ for any $x\in D$.

Let $\phi:\overline{\ell}\ot_{\ell}D\ra M_m(\overline{\ell})$ be
an isomorphism of $\overline{\ell}$-algebras, where
$m=\sqrt{dim_{\ell}D}$, as in the proof of Corollary
\ref{corollary1}. Then
$$\phi(1\ot (n\cdot x-x))=\phi(1\ot u)\phi(1\ot x)\phi(1\ot
u)^{-1}-\phi(1\ot x)$$ in $M_m(\overline{\ell})$, so then
$\phi(1\ot (n\cdot x-x))$ has trace zero in
$M_m(\overline{\ell})$. This shows that $\phi(1\ot (n\cdot
x-x))\in [M_m(\overline{\ell}),M_m(\overline{\ell})]=\phi
([\overline{\ell}\ot_{\ell}D,\overline{\ell}\ot_{\ell}D])$, and
then $1\ot (n\cdot x-x)\in
[\overline{\ell}\ot_{\ell}D,\overline{\ell}\ot_{\ell}D]=\overline{\ell}\ot
_{\ell}[D,D]$. Thus $n\cdot x-x$ must lie in $[D,D]$ for any $x\in
D$. In particular $n\cdot z-z\in [D,D]$, so $\alpha_n=1$.\\

Now $\alpha_{ng}=\alpha_n(n\cdot \alpha_g)=\alpha_g$ for any $g\in
G$ and $n\in N$, so $\alpha_g$ only depends on the class of $g$ in
the factor group $G/N\simeq \theta(G)$. Then $\theta(G)$ acts on
$\ell^*$ by $\theta(g)\cdot a=g\cdot a$ for any $g\in G$ and $a\in
\ell$, and the family $(\beta_q)_{q\in \theta(G)}$ defined by
$\beta_{\theta(g)}=\alpha_g$ for any $g\in G$, is a 1-cocycle of
$\theta(G)$ in $\ell^*$. Since $\ell/\ell^{\theta(G)}$ is a finite
Galois extension with Galois group $\theta(G)$, we know that
$H^1(\theta(G),\ell^*)=1$ by Hilbert Theorem 90. Thus there exists
$v\in \ell^*$ such that $\beta_q=\frac{q\cdot v}{v}$ for any $q\in
\theta(G)$, and then $\alpha_g=\frac{g\cdot v}{v}$ for any $g\in
G$.\\

Now let $\Phi:\ell \ra D/[D,D]$ be the isomorphism of
$\ell$-vector spaces defined by $\Phi(a)=av\overline{z}$ for any
$a\in \ell$. If $g\in G$ and $a\in \ell$, we have \bea
g\cdot\Phi(a)&=&g\cdot (av\overline{z})\\&=&g\cdot
\overline{avz}\\
&=&\overline{(g\cdot a)(g\cdot v)(g\cdot z)}\\&=&(g\cdot a)(g\cdot
v)\overline{g\cdot z}\\&=&(g\cdot a)(g\cdot
v)\frac{1}{\alpha_g}\overline{z}\\
&=&(g\cdot a)v\overline{z}\\&=&\Phi(g\cdot a)\eea This means that
$\Phi$ is an isomorphism of $kG$-modules, and in order to show
that $(D/[D,D])_G\neq 0$, which by Proposition
\ref{symmetriccoinvariants} will end the proof, it is enough to
show that $\ell_G\neq 0$. This is the same with
$\ell_{\theta(G)}\neq 0$. By the Normal Basis Theorem, $\ell$ is a
free $\ell^{\theta (G)}\theta(G)$-module of rank 1, and then it is
a free $k\theta(G)$-module of rank $[\ell^{\theta(G)}:k]$. Then
$\ell \neq \omega (\theta(G))\ell$, so $\ell_{\theta(G)}\neq 0$.
\end{proof}

\begin{remark}
{\rm (1) Under certain conditions, we can construct a non-zero
coinvariant of $D/[D,D]$ directly from an invariant of
$D/[D,D]=V$. Indeed, if ${\rm char}\, k$ does not divide the order
of $G$, $u\in V$ is non-zero and $g\cdot u=u$ for any $g\in G$,
then $u$ does not lie in $\omega(G)V$. Indeed, if it did, then
$u=\sum_i (g_i\cdot v_i-v_i)$ for some $(g_i)_i$ in $G$ and
$(v_i)_i$ in $V$. Then
$$|G|u=\sum_{g\in G}g\cdot u=\sum_i((\sum_{g\in
G}gg_i)-(\sum_{g\in G}g))\cdot v_i=0$$ Since $|G|\neq 0$ in $k$,
this would imply that $u=0$, a contradiction.  Thus the image of
$u$ in $V_G$ is non-zero.\\
In particular, if $A$ is a finite dimensional $G$-graded division
algebra such that ${\rm char}\, k\nmid {\rm dim}_{\ell}\; A$, then
since ${\rm dim}_{\ell}\; A= |G|{\rm dim}_{\ell}\; D$, we can
apply Corollary \ref{corollary1} and see that 1 (in fact its class
modulo $[D,D]$) is a non-zero coinvariant of $D/[D,D]$.\\
(2) If $1\notin [D,D]$, then the inclusion of $\ell$ in $D$,
followed by the natural projection $D\ra D/[D,D]$, produces
directly an isomorphism of $kG$-modules $\ell\simeq D/[D,D]$. If
$1\in [D,D]$, which by Corollary \ref{corollary1} is a rare
situation, then one needs to construct such an isomorphism as in
the proof of the previous theorem.}
\end{remark}

\section{Proof of Theorem A}

We first consider finite dimensional graded simple algebras $A$,
and we show that they are graded symmetric. We recall that $A$ is
graded simple if 0 and $A$ are the only graded ideals. It is known
that in this case $A$ is a direct sum of minimal graded left
ideals, isomorphic up to a graded shift, see \cite[Section
2.9]{nvo}.

By the graded version of Wedderburn's Theorem, see \cite[Theorem
2.10.10]{nvo}, a finite dimensional graded simple algebra is
isomorphic to a graded algebra of the form
$M_n(\Delta)(\sigma_1,\ldots,\sigma_n)$, for some positive integer
$n$, some finite dimensional graded division algebra $\Delta$, and
some $\sigma_1,\ldots,\sigma_n\in G$. We recall that
$M_n(\Delta)(\sigma_1,\ldots,\sigma_n)$ is just the matrix algebra
$M_n(\Delta)$ with the $G$-grading such that the homogeneous
component of degree $g$ is
$$M_n(\Delta)(\sigma_1,\ldots,\sigma_n)_{g}=\left(
\begin{array}{cccc}
\Delta_{\sigma_1g \sigma_1^{-1}}&\Delta_{\sigma_1g \sigma_2^{-1}}&\ldots&\Delta_{\sigma_1g \sigma_n^{-1}}\\
\Delta_{\sigma_2g \sigma_1^{-1}}&\Delta_{\sigma_2g \sigma_2^{-1}}&\ldots&\Delta_{\sigma_2g \sigma_n^{-1}}\\
\ldots&\ldots&\ldots&\ldots\\
\Delta_{\sigma_ng \sigma_1^{-1}}&\Delta_{\sigma_ng
\sigma_2^{-1}}&\ldots&\Delta_{\sigma_ng \sigma_n^{-1}}
\end{array}
\right)$$

\begin{proposition}  \label{propositionsimple}
A finite dimensional graded simple algebra is graded symmetric.
\end{proposition}
\begin{proof}
Let $A= M_n(\Delta)(\sigma_1,\ldots,\sigma_n)$ be a graded simple
algebra, as above. Since the graded division algebra $\Delta$ is
graded symmetric, there exists a non-zero linear map
$\lambda:\Delta\ra k$  such that $\lambda(ab)=\lambda(ba)$ for any
$a,b\in \Delta$, $\lambda(\Delta_g)=0$ for any $g\neq e$. Define
$\Lambda:A\ra k$ by $\Lambda(x)=\lambda (tr(x))$, where $tr$ is
the usual trace map of $M_n(\Delta)$. Since for $g\neq e$ we have
$\sigma_ig\sigma_i^{-1}\neq e$, we see that $\lambda
(\Delta_{\sigma_ig\sigma_i^{-1}})=0$ for any $i$, showing that
$\Lambda(A_g)=0$.

On the other hand, if $x=(a_{ij})_{1\leq i,j\leq n}$ and
$y=(b_{ij})_{1\leq i,j\leq n}$, then $tr(xy)=\sum
_{i,j}a_{ij}b_{ji}$ and $tr(yx)=\sum_{i,j}b_{ji}a_{ij}$, showing
that $\Lambda(xy)=\lambda(tr(xy))=\lambda(tr(yx))=\Lambda(yx)$.

Finally we show that ${\rm Ker}\; \Lambda$ does not contain
non-zero graded left ideals. Indeed, if $\Lambda(Ax)=0$, where
$x=(a_{ij})_{1\leq i,j\leq n}$ is a homogeneous element in $A$,
then for any $i,j$ we have
$0=\Lambda((be_{ij})x)=\lambda(ba_{ji})$ for any $b\in \Delta$,
where $e_{ij}$ is the usual matrix unit. Thus $\lambda(\Delta
a_{ji})=0$, so $\lambda(\sum_{g\in
G}\Delta_{g^{-1}}(a_{ji})_g)=0$, and then for any $g\in G$ one has
$0=\lambda(\Delta_{g^{-1}}(a_{ji})_g)=\lambda(\Delta
(a_{ji})_g))$. If $(a_{ji})_g\neq 0$, we would obtain $\lambda=0$,
a contradiction. This shows  that $a_{ji}=0$. We conclude that
$x=0$, and then $\Lambda$ makes $A$ a graded symmetric algebra.
\end{proof}

\begin{remark}
{\rm In the case where the classes of $\sigma_1,\ldots,\sigma_n$
in $G/Z(G)$ are equal, in particular in the case where $G$ is
abelian, there is an isomorphism of graded algebras $\Delta\ot
M_n(k)(\sigma_1,\ldots,\sigma_n)\simeq
M_n(\Delta)(\sigma_1,\ldots,\sigma_n)$, given by the usual algebra
isomorphism $\Delta\ot M_n(k)\simeq M_n(\Delta)$. The grading on
$M_n(k)(\sigma_1,\ldots,\sigma_n)$ is such that the homogeneous
component of degree $g$ is the span of all matrix units $e_{ij}$
for which $\sigma_i^{-1}\sigma_j=g$. This is a good grading on the
matrix algebra $M_n(k)$, and it is known that it is graded
symmetric, see \cite[Example 6.4]{dnn}. In this case, the fact
that $ M_n(\Delta)(\sigma_1,\ldots,\sigma_n)$  is graded symmetric
also follows from the fact that the tensor product of two graded
symmetric algebras is graded symmetric.}
\end{remark}

Let now $A$ be a finite dimensional graded semisimple algebra.
Then $A$ is a finite direct product of graded simple algebras, see
\cite[Section 2.9]{nvo}. Using Proposition \ref{propositionsimple}
and the obvious fact that a finite direct product of graded
symmetric algebras is graded symmetric, we obtain that any finite
dimensional graded semisimple algebra is graded symmetric, which
ends the proof of Theorem A.

\section{The center of a graded division algebra}

We note that the center of a symmetric algebra is not necessarily
symmetric, as the following examples show.

\begin{example}
{\rm (1) If $A$ is a finite dimensional algebra, and $M$ is a left
$A$, right $A$-bimodule, the space $A\oplus M$ has an algebra
structure with the multiplication given by
$(a,m)(a',m')=(aa',am'+ma')$; this is called the trivial extension
of $A$ and $M$. If $M=A^*$, the dual space of $A$, with the usual
bimodule structure, the trivial extension $A\oplus A^*$ is a
symmetric algebra, see \cite[Example 16.60]{lam2}. It is easy to
check that $Cen(A\oplus A^*)=Cen(A)\oplus V$, where
$$V=\{ a^*\in A^*|\; a^*([A,A])=0\}\simeq A/[A,A]$$
Thus $Cen(A\oplus A^*)$ is the trivial extension $Cen(A)\oplus
A/[A,A]$.

Assume that $k$ has characteristic different from 2, and let $A$
be the 4-dimensional algebra generated by $c$ and $x$, subject to
relations
$$c^2=1,\; x^2=0,\; xc=-cx.$$
This is just the underlying algebra of Sweedler's Hopf algebra.
Then $Cen(A)=k$ and $[A,A]=<x,cx>$, so $Cen(A\oplus A^*)\simeq
k[u,v]/(u^2,uv,v^2)$, a commutative local algebra having more than
one minimal ideal. We conclude that $Cen(A\oplus A^*)$ is not even
Frobenius, see for example \cite[Exercise 14, page 114]{lam2}.

Finally note that if $D$ is a finite dimensional $k$-division
algebra, then $D/[D,D]$ has dimension 1 over the center $\ell$ of
$D$, so the center of $D\oplus D^*$ is just the trivial extension
$\ell \oplus \ell^*$, which is a symmetric $k$-algebra.

(2) Let $A=kS_3$ be the group algebra of the symmetric group
$S_3$. The center $Cen(A)$ of $A$ has a $k$-basis $\{ 1, x, y\}$,
where $x=\sigma +\sigma^2$ and $y=\tau +\sigma\tau +\sigma^2\tau$,
where $\tau$ is a transposition and $\sigma$ is a cycle of length
3; indeed, it is well known that a basis of a group algebra
consists of the sums of group elements in the conjugacy classes of
the group. The basis elements satisfy the relations
$$x^2=x+2,\; y^2=3x+3,\; xy=yx=2y$$
Using the notation in \cite[page 451]{lam2}, if $\al=
(\al_1,\al_2,\al_3)\in k^3$, then the associated paratrophic
matrix is
$$P_\al= \left( \begin{array}{ccc} \al_1 & \al_2 & \al_3\\
\al_2 & 2\al_1+\al_2 & 2\al_3\\
 \al_3 & 2\al_3 & 3\al_1+3\al_2 \end{array} \right)$$
Then
$det(P_\al)=3(\al_2-2\al_1)(\al_1+\al_2+\al_3)(\al_3-\al_1-\al_2)$.
In the case where $k$ has characteristic 3, we get that $P_\al$ is
singular for any $\al$, so $Cen(A)$ is not Frobenius by
\cite[16.82]{lam2}. If $k$ has characteristic $\neq 3$, then
$Cen(A)$ is Frobenius (or equivalently, symmetric), since for
$\al_1=\al_3=0$ and $\al_2\neq 0$, $P_\al$ is nonsingular.}
\end{example}

By the second example above, we see that the center of a graded
division algebra is not necessarily a symmetric algebra. However,
Theorem B shows that the center of a finite dimensional graded
division algebra is symmetric, provided that the order of
$G$ is not zero in $k$.\\

{\bf Proof of Theorem B:}  We keep the same notation  as in
Section \ref{sectiondivisionalgebras}. Let $R=C_A(A_e)=\{ a\in
A|ab=ba \mbox{ for any }b\in A_e\}$, which is a graded division
subalgebra of $A$. We have that $R_e=C_A(A_e)\cap
A_e=Cen(A_e)=\ell$.

The weak action of $G$ on $A$ induces an action of $G$ on $R$,
more precisely $g\cdot r=u_gru_g^{-1}$ for any $g\in G$ and $r\in
R$; here $(u_g)_{g\in G}$ is a family of invertible elements of
$A$, with $u_g$ of degree $g$.  This action of $G$ on $R$ is just
the Miyashita-Ulbrich action in the particular case of a graded
division algebra. The invariant subalgebra with respect to this
action is $R^G=Cen(A)$.

Let $\mu:\ell \ra k$ be a $k$-linear map such that $\mu(1)\neq 0$,
and let $\lambda:\ell \ra k$, $\lambda(a)=\sum_{g\in G}\mu (g\cdot
a)$. Clearly $\lambda$ is $k$-linear and symmetric, $\lambda
(g\cdot a)=\lambda(a)$ for any $g\in G$, $a\in \ell$, and
$\lambda$ is non-zero, since $\lambda(1)=|G|\mu(1)$.

Then as in the proof of Proposition
\ref{propsymmetriccrossedproducts} we can construct a non-zero
symmetric linear map $\Lambda:R\ra k$ by $\Lambda(r)=\lambda(r_e)$
for any $r\in R$.
 Moreover,  $\Lambda$ is
invariant with respect to the $G$-action. Indeed, if $g\in G$ and
$r\in R$, then \bea \Lambda (g\cdot r)&=&\Lambda
(u_gru_g^{-1})\\&=&\lambda
(u_gr_eu_g^{-1})\\
&=&\lambda (g\cdot r_e)\\&=& \lambda(r_e)\\&=&\Lambda(r).\eea
 Now using
\cite[Exercise 33, page 457]{lam2}, we obtain that $R^G=Cen(A)$ is
symmetric. \\

{\bf Acknowledgment} We thank an anonymous referee, whose comments
and suggestions on a previous version of this paper helped us to
prove Theorem A in the present general version. We also thank
Andrei M\u{a}rcu\c{s} for providing us the references
\cite{harris} and \cite{marcus}.

\end{document}